\newtheorem{theorem}{Theorem} [section]
\newtheorem{lemma}[theorem]{Lemma}
\newtheorem{prop}[theorem]{Proposition}
\newtheorem{cor}[theorem]{Corollary}
\theoremstyle{definition}
\newtheorem{remark}[theorem]{Remark}
\numberwithin{equation}{section}
\renewcommand{\mod}[1]{{\ifmmode\text{\rm\ (mod~$#1$)}\else\discretionary{}{}{\hbox{ }}\rm(mod~$#1$)\fi}}
\newcommand{\A}{{\mathcal A}}
\newcommand{\B}{{\mathcal B}}
\newcommand{\C}{{\mathcal C}}
\newcommand{\e}{{\rm e}}
\newcommand{\D}{{\mathcal D}}
\newcommand{\Z}{{\mathbb Z}}
\begin{document}

\title{On sets of integers which are both sum-free and product-free}

\author[P. Kurlberg]{P\"ar Kurlberg}
\address{Department of Mathematics\\
KTH\\
SE-10044, Stockholm, Sweden}
\email{kurlberg@math.kth.se}

\author[J. C. Lagarias] {Jeffrey C.  Lagarias}
\address{Department of Mathematics\\
University of Michigan\\
Ann Arbor, MI 48109, USA}
\email{lagarias@umich.edu}

\author[C. Pomerance]{Carl Pomerance}
\address{Mathematics Department\\
Dartmouth College\\
Hanover, NH 03755, USA}
\email{carl.pomerance@dartmouth.edu}

\date{Dec. 16, 2011}

\subjclass[2000]{11B05, 11B75}


\begin{abstract}
We consider sets of positive integers containing no sum of two elements in the set and
also no product of two elements.  We show that the upper density of such a set is
strictly smaller than $\frac12$ and that this is best possible.
Further, we also find the maximal
order for the density of such sets that are also periodic modulo some positive integer.
\end{abstract}

\maketitle

%
%
\section{Introduction}
\label{sec:introduction}

The sum-product problem in combinatorial number theory asserts that if $\A$ is a finite set of
positive integers, then either $\A+\A$ or $\A\cdot\A$ is a much larger set than $\A$,
where $\A+\B$ is the set of
sums $a+b$ with $a\in\A,b\in\B$ and $\A\cdot\B$ is the set of products $ab$ with $a\in\A,b\in\B$.
A famous conjecture by Erd\H os and Szemer\'edi~\cite{ES} asserts 
that if $\epsilon>0$
is arbitrary and $\A$ is a set of $N$ positive integers, then for $N$ sufficiently large depending
on the choice of $\epsilon$, we have
$$
|\A+\A|+|\A\cdot\A|\ge N^{2-\epsilon}.
$$
This conjecture is motivated by the cases when either $|\A+\A|$ or $|\A\cdot\A|$ is
unusually small.  For example, if $\A=\{1,2,\dots,N\}$, then $\A+\A$ is small,
namely, $|\A+\A|<2N$.  However, $\A\cdot\A$ is large since there
is some $c>0$ such that $|\A\cdot\A|>N^2/(\log N)^c$.  And if $\A=\{1,2,4,\dots,2^{N-1}\}$,
then $|\A\cdot\A|<2N$, but $|\A+\A|>N^2/2$.  The best that we currently know towards this
conjecture is that it holds with exponent $4/3$ in the place of 2, a result of Solymosi~\cite{S}.
(In fact, Solymosi proves this when $\A$ is a set of positive real numbers.)

In this paper we consider a somewhat different question:  how dense can $\A$ be if
both $\A+\A$ and $\A\cdot\A$ have no elements in common with $\A$?  If $\A\cap(\A+\A)=\emptyset$
we say that $\A$ is sum-free and if $\A\cap(\A\cdot\A)=\emptyset$ we say
$\A$ is product-free.  
Before stating the main results, we give some background on sets that
are either sum-free or product-free.

If $a\in\A$ and $\A$ is sum-free, then $\{a\}+\A$ is disjoint from $\A$, and so we immediately
have that the upper asymptotic density of $\A$ is at most $\frac12$.  Density $\frac{1}{2}$ can be achieved
by taking $\A$ as the set of odd natural numbers.  Similarly, if $\A$ is a set of residues
modulo~$n$ and is sum-free, then $D(\A):=|\A|/n$ is at most $\frac12$, and this can be
achieved when $n$ is even and $\A$ consists of the odd residues.  The maximal density for
$D(\A)$ for $\A$ a sum-free set in $\Z/n\Z$ was considered in
\cite{DY69}. 
In particular, the maximum for $D(\A)$ is $\frac13-\frac1{3n}$ if $n$ is
divisible solely 
by primes that are 1~modulo~3, it is $\frac13+\frac1{3p}$ if $n$ is divisible by some prime
that is 2~modulo~3 and $p$ is the least such, and it is $\frac13$
otherwise.  Consequently,
we have $D(\A)\le\frac25$ if $\A$ is a sum-free set in $\Z/n\Z$ and
$n$ is odd.  
It is worth noting that maximal densities of subsets of arbitrary
finite abelian groups are determined in \cite{GR05}.   For
generalizations to subsets of finite non-abelian groups, see
\cite{Kedlaya}.

The problem of the maximum density of product-free sets of positive integers,
or of subsets of $\Z/n\Z$, only recently received attention.
For subsets of the positive integers, it was shown 
in \cite{KLP11} 
that the upper density of a
product-free set must be strictly less 
than~1.  Let $D(n)$ denote the maximum value of $D(\A)$ as $\A$ runs over product-free sets in
$\Z/n\Z$.  In \cite{PS11} it was shown that $D(n)<\frac12$ for the vast majority of integers,
namely for every integer not divisible by the square of a product of 6 distinct primes.
Moreover, the density of integers which are divisible by the square of a product
of 6 distinct primes was shown to be smaller than $1.56\times10^{-8}$.  

Somewhat surprisingly, $D(n)$ can in fact be arbitrarily close
to~1 (see \cite{KLP11}), and
thus there are integers
$n$ and sets of residues modulo~$n$ consisting of 99\% of all residues, with the set of pairwise
products lying in the remaining 1\% of the residues.  However, it is not easy to find a numerical
example that beats 50\%.   In \cite{KLP11}, an example of a number $n$ with
about $1.61\times10^8$ decimal digits was given with $D(n)>\frac12$; it is not known if there
are any substantially smaller examples, say with fewer than $10^8$ decimal digits.

In \cite{KLP2} the maximal order of $D(n)$ was essentially found:
There are positive constants $c,C$ such that for all sufficiently large $n$, we have
$$
D(n)\le1-\frac{c}{(\log\log n)^{1-\frac\e2\log2}(\log\log\log n)^{1/2}}
$$
and there are infinitely many $n$ with
$$
D(n)\ge1-\frac{C}{(\log\log n)^{1-\frac\e2\log2}(\log\log\log n)^{1/2}}.
$$

In this paper we consider two related questions.  First, if $\A$ is a set of integers which
is both sum-free and product-free, how large may the upper density of $\A$ be?
Second, set 
\[
D'(n) := \max\{ D(\A): \A~\mbox{is a sum-free, product-free subset of}~~ \Z/n\Z\}.
\]
What is the maximal order of $D'(n)$?  We prove the following results.
\begin{theorem}
\label{prop-upperdens}
If $\A$ is a set of positive integers that is both product-free and sum-free,
then $\A$ has upper density at most $\frac12\left(1-\frac1{5a_0}\right)$, where $a_0$ is the least element of $\A$.
\end{theorem}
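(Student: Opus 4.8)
The plan is to start from the elementary sum-free bound and then extract a genuine saving from product-freeness. Since $\A$ is product-free we have $1\notin\A$ (otherwise $1=1\cdot1\in\A\cdot\A$), so the least element satisfies $a_0\ge2$. Sum-freeness gives $\A\cap(\A+a_0)=\emptyset$, and because $\A\subseteq[a_0,\infty)$ the sets $\A\cap[1,x]$ and $(\A\cap[1,x])+a_0$ are disjoint subsets of $[a_0,x+a_0]$; this is the usual proof that $\overline{d}(\A)\le\tfrac12$. The key remark is that equality forces almost every integer of a long initial segment to lie in exactly one of $\A$ and $\A+a_0$, so it suffices to produce a set of integers of positive lower density, at least $\tfrac1{5a_0}$, lying in neither $\A$ nor $\A+a_0$; this would upgrade $\overline{d}(\A)\le\tfrac12$ to the claimed bound.

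These extra vacancies should come from product-freeness. As $a_0\in\A$, product-freeness gives $\A\cap a_0\A=\emptyset$, so every product $a_0a$ with $a\in\A$ is missing from $\A$. Such a product is a vacancy avoiding \emph{both} $\A$ and $\A+a_0$ exactly when its predecessor $a_0a-a_0=a_0(a-1)$ is also absent from $\A$. Writing $\B=\{k:a_0k\in\A\}$, this says $a-1\notin\B$. Thus, modulo the passage from densities to upper densities, I expect an inequality of the shape $2\,\overline{d}(\A)\le1-E$, where $E$ is the density of the set $\{a_0a:a\in\A,\ a-1\notin\B\}$ of products not already cancelled by the shift, and the whole statement reduces to the lower bound $E\ge\tfrac1{5a_0}$.

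To control $E$ I would study $\B$ in its own right. It is sum-free (if $ka_0,\ell a_0\in\A$ then $(k+\ell)a_0\notin\A$), it contains $1$ (as $a_0\in\A$), and these two facts together force $\B$ to have no two consecutive elements; moreover $\A\cap\B=\emptyset$, and $\B\cap a_0\B=\emptyset$ because $ka_0\cdot\ell a_0\in\A\cdot\A$ gives $k\ell a_0^2\notin\A$, i.e. $k\ell a_0\notin\B$. In particular $\B$ carries exactly the same structure (sum-free, disjoint from its own $a_0$-dilate) as $\A$, which opens the door to a self-similar estimate. The elements $a\in\A$ with $a-1\in\B$ inject into $\B$, so their density is at most $\overline{d}(\B)$; since $\A$ and $\B$ are disjoint, $\overline{d}(\B)$ is constrained, and this is the lever for bounding $E$ from below.

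The hard part, and the place where the constant $\tfrac15$ must be earned, is that $E$ is intrinsically multiplicative: the products $a_0a$ near $x$ arise from elements $a$ near $x/a_0$, so $E$ couples the distribution of $\A$ at two scales, while upper density is not additive across scales. I would resolve this by fixing a scale $x$ at which $\overline{d}(\A)$ is nearly attained and splitting into regimes. When $\overline{d}(\A)$ is bounded away from $\tfrac12$ the bound is soft. The decisive regime is $\overline{d}(\A)$ near $\tfrac12$, where $\A\cap[1,x]$ is a sum-free set of near-maximal size; here I would invoke the structural dichotomy for dense sum-free sets—such a set is concentrated either on the odd numbers or on a top interval $(\tfrac{x}{2},x]$. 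In the interval case every $a\in\A$ exceeds $x/2$ while $\B$ lies below $x/a_0\le x/2$, so $a-1\notin\B$ automatically and $E$ is as large as $\overline{d}(\A)/a_0$, even beating the claimed bound; the odd case is incompatible with product-freeness through $a_0$. The threshold $\tfrac25$ governing the sum-free structure theory (already visible in the densities of sum-free subsets of $\Z/n\Z$ recalled above) is what yields the constant $5$. The single most delicate step is therefore to make the heuristic $2\,\overline{d}(\A)\le1-E$ rigorous and uniform, converting the cross-scale product information into an honest lower bound for the count of double vacancies at a single scale.
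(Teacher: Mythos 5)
Your global architecture is in fact the same as the paper's: its Proposition~\ref{prop-upperdensbetter} also exhibits three pairwise disjoint subsets of $[1,x]$, namely $\A(x)$, $a_0+\A(x-a_0)$, and the product set $\{a_0\}\cdot\A_{-1}(x/a_0)$ with $\A_{-1}=\{a\in\A:a-1\in\A\}$ (a subset of your vacancy set, since $a-1\in\A$ forces $a_0(a-1)\notin\A$), and your reduction --- find vacancies of lower density at least $\frac1{5a_0}$ avoiding both $\A$ and $a_0+\A$ --- is valid. The genuine gap is the step you yourself flag and then do not close: a lower bound on the number of vacancies lying \emph{inside} $[1,x]$, at scales $x$ where $|\A(x)|/x$ is near its limsup. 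Such vacancies are $a_0a$ with $a\le x/a_0$, so one must bound from below a count of suitable $a\in\A$ at the smaller scale $x/a_0$. The paper has a concrete engine for this: for $g\in\Delta\A$, an inclusion--exclusion over two shifted copies of $\A$ (Lemma~\ref{lem-a1a2}, Corollary~\ref{cor-shiftest}) gives $|\A_g(x)|\ge\frac12(1-3\delta_x)x+O(1)$, where $\delta_x=1-2|\A(x)|/x$; Corollary~\ref{cor-twofifths} (upper density $>\frac25$ forces $\Delta\A$ to be a subgroup of $\Z$) shows that in the hard case $-1\in\Delta\A$, so this applies to $\A_{-1}$; and, crucially, Lemma~\ref{lem-interval} (at most half of any interval lies in $\A$) transfers the density of $\A_{-1}$ from scale $x$ down to scale $x/a_0$, because $\A_{-1}$ cannot be concentrated in $(x/a_0,x]$. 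That last step is exactly the rigorous resolution of the cross-scale difficulty you call ``the single most delicate step''; nothing in your plan replaces it.

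Moreover, the resolution you do sketch is erroneous. The pure ``interval case'' $\A(x)\subseteq(\frac x2,x]$ is vacuous once $x\ge 2a_0$, since $a_0\in\A(x)$; and in a stability version (most of $\A(x)$ in the top half) your argument confuses scales: the products $a_0a$ with $a>x/2$ lie in $(a_0x/2,\,a_0x]$, hence outside $[1,x]$, so they cannot offset $|\A(x)|$, and whether $a-1\in\B$ for such $a$ is governed by $\A$ near scale $a_0x$, about which the scale-$x$ structure says nothing --- so the claim that $E$ is as large as $\overline{d}(\A)/a_0$ there is unfounded. (The correct way to kill this case is different: $\A(x)$ and $a_0+\A(x)$ would both pack into $(\frac x2,\,x+a_0]$, forcing $|\A(x)|\le\frac x4+O(1)$.) The ``odd case'' is likewise not ``incompatible with product-freeness'': all-odd sum-free, product-free sets exist (e.g.\ the integers $\equiv 3,7\pmod{10}$); what is true, and what the paper proves, is a density bound obtained by combining the disjointness of the odd sets $\A(x)$ and $\{a_0\}\cdot\A(x/a_0)$ with Lemma~\ref{lem-interval} applied to $(x/a_0,x]$ --- again the missing transfer mechanism. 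Finally, your route would additionally require a quantitative ``nearly all odd'' structure theorem for finite sum-free sets of density $\frac12-\frac1{10a_0}$ (Deshouillers--Freiman-type machinery, far heavier than anything the paper uses, and not precisely cited), plus an argument that $a_0$ is odd. As it stands, the decisive regime is unproved.
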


\begin{theorem}
\label{prop-upper}
There is a positive constant $\kappa$ such that for all sufficiently large numbers $n$, 
$$
D'(n)\le\frac12-\frac{\kappa}{(\log\log n)^{1-\frac\e2\log2}(\log\log\log n)^{1/2}}.
$$
\end{theorem}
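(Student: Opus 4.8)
The plan is to exploit the near-extremal structure of sum-free sets: the bound $\tfrac12$ is only nearly attained by sets resembling the odd residues, and the odd residues carry the full multiplicative structure on which the product-free bound of \cite{KLP2} bites. Throughout write $L(n):=(\log\log n)^{1-\frac\e2\log2}(\log\log\log n)^{1/2}$, so that \cite{KLP2} gives $D(n)\le 1-c/L(n)$ while we must show $D'(n)\le\frac12-\kappa/L(n)$. First I would dispose of odd $n$: every sum-free subset of $\Z/n\Z$ with $n$ odd has density at most $\frac25$ (recalled in the introduction), and since $L(n)\to\infty$ we get $\frac25\le\frac12-\kappa/L(n)$ for large $n$. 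So assume $n$ is even.

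Given a sum-free, product-free $\A\subseteq\Z/n\Z$, split it by parity, $\A=\A_0\sqcup\A_1$ with $\A_0$ the even and $\A_1$ the odd members; let $\alpha,\beta$ denote the densities of $\A_1,\A_0$ inside the odd, resp. even, residues, so $D(\A)=\frac12(\alpha+\beta)$. The product of two odd residues is odd, so $\A_1$ is product-free among the odd residues, while a sum of two odd residues is even, so sum-freeness forces $(\A_1+\A_1)\cap\A_0=\emptyset$ among the even residues. When $n\equiv2\pmod4$ this closes cleanly: by the Chinese Remainder Theorem $\Z/n\Z\cong\Z/2\Z\times\Z/(n/2)\Z$ carries the odd residues onto $\{1\}\times\Z/(n/2)\Z$ compatibly with multiplication, so $\A_1$ is a product-free subset of $\Z/(n/2)\Z$ and $\alpha\le D(n/2)\le 1-c/L(n/2)$. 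Now run the dichotomy. If $\alpha>\frac12$, then for every even $g$ the sets $\A_1$ and $g-\A_1$ both lie in the odd residues and each has size exceeding half of them, so they meet; hence $\A_1+\A_1$ is all of the even residues and $\A_0=\emptyset$, giving $D(\A)=\frac12\alpha\le\frac12-\frac{c}{2L(n/2)}$. If instead $\alpha\le\frac12$, then $\A_0$ is sum-free among the even residues, which form a group isomorphic to $\Z/(n/2)\Z$ with $n/2$ odd, so $\beta\le\frac25$ and $D(\A)\le\frac12(\frac12+\frac25)=\frac9{20}$, again below $\frac12-\kappa/L(n)$ for large $n$. Using $L(n/2)\sim L(n)$, both branches give the theorem for $n\equiv2\pmod4$.

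The remaining case $4\mid n$ is where the real work lies, and I expect it to be the main obstacle. Two features of the clean argument fail: the odd residues mod $n$ no longer form a multiplicative copy of a single $\Z/m\Z$, and the even residues now form a group $\Z/(n/2)\Z$ of even order, so the sum-free bound on $\A_0$ gives only $\beta\le\frac12$ and the two branches degenerate to $D(\A)\le\frac12$ with no gain. To recover the loss I would (i) prove a variant of the \cite{KLP2} bound directly for product-free subsets of the odd residues mod $n$ — the multiplicative obstruction responsible for $L(n)$ lives in the odd part of $n$, so this should go through with the same exponent — and (ii) control the even part by recursing on the $2$-adic valuation layers $\A^{(j)}=\{a\in\A:v_2(a)=j\}$, which are graded by multiplication ($\A^{(i)}\A^{(j)}$ lands in valuation $i+j$) and are coupled through the sum-free condition. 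The delicate point throughout is the near-extremal regime $\alpha,\beta\approx\frac12$: here one must apply a Kneser-type analysis to $\A_1+\A_1$ and to $\A_0+\A_1$, and the hard sub-case is when the relevant sumset has a large stabilizer, forcing periodicity modulo a large subgroup and effectively reducing to a smaller modulus on which one induces. Quantifying that this structured regime is incompatible with product-freeness except at the cost of $\kappa/L(n)$ is the crux of the argument.
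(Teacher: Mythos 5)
Your handling of odd $n$ and of $n\equiv2\pmod4$ is correct, and the latter is a genuinely different route from the paper's: the CRT identification of the odd residues with the multiplicative monoid $\Z/(n/2)\Z$, plus the pigeonhole dichotomy (if $\alpha>\frac12$ then $\A_1+\A_1$ covers every even residue, forcing $\A_0=\emptyset$; if $\alpha\le\frac12$ then the odd-modulus sum-free bound gives $\beta\le\frac25$ and $D(\A)\le\frac9{20}$), does yield the stated bound for such $n$. But the theorem is claimed for \emph{all} large $n$, and your case $4\mid n$ is not a proof; it is a research plan whose two ingredients (a KLP2-type bound for the odd-residue monoid, and a $2$-adic layering with near-extremal Kneser analysis) are both left unexecuted. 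You say so yourself, and you are right to worry: for $4\mid n$ the odd residues form the monoid $(\Z/2^v\Z)^*\times\Z/m\Z$ under CRT, to which the theorem of \cite{KLP2} does not directly apply, and the even residues form a group of even order, so neither of your two levers works. This is a genuine gap, and it covers a quarter of all moduli.

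The paper closes exactly this gap with two ideas absent from your proposal. First, in place of the parity decomposition it proves Proposition~\ref{prop-spf} (Kneser's theorem plus induction on even moduli): \emph{any} sum-free $\A\subset\Z/n\Z$ with $D(\A)>\frac25$ forces $n$ even and $\A$ contained entirely in the odd residues -- for every even $n$, including $4\mid n$. So the even part of $\A$ is empty from the outset, and no valuation layering is needed; the "large stabilizer" sub-case you flag as the crux is precisely what the induction absorbs, since a nontrivial stabilizer $H=\langle h\rangle$ of the sumset lets one project to $\Z/h\Z$, where the projection is still sum-free and still dense, and recurse. Second, rather than reproving a variant of the \cite{KLP2} bound for the odd-residue monoid (projecting \emph{down}), the paper goes \emph{up}: choosing $k$ with $n\le2^k<2n$ and $N=2^{2k}n$, it inflates $\A$ to the set $\B$ of integers $2^jb$ with $j\le k$, $b\le 2^{2k-j}n$, $b\equiv a\pmod n$ for some $a\in\A$. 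Because the elements of $\A$ are odd and $2^{2k}\mid N$, $2$-adic valuations add exactly in any product congruence, so $\B$ is product-free modulo $N$, while $|\B|>\left(1-\frac1n\right)2^{2k+1}|\A|$, i.e.\ $\B$ has density essentially $2D(\A)$ in $\Z/N\Z$. Applying the \cite{KLP2} theorem to $\B$ as a black box, together with $\log\log N=\log\log n+O(1)$, then gives $D(\A)\le\frac12-\kappa/(\log\log n)^{1-\frac\e2\log2}(\log\log\log n)^{1/2}$ uniformly for all even $n$. If you want to salvage your approach, the quickest repair is to prove Proposition~\ref{prop-spf} and then adopt this inflation step; your CRT argument then becomes superfluous even in the case $n\equiv2\pmod4$.
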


\begin{theorem}
\label{prop-lower}
There is a positive constant $\kappa'$ and infinitely many integers $n$ with
$$
D'(n)\ge\frac12-\frac{\kappa'}{(\log\log n)^{1-\frac\e2\log2}(\log\log\log n)^{1/2}}.
$$
\end{theorem}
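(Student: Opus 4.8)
The plan is to deduce the bound from the product-free lower bound of \cite{KLP2} quoted above, paying exactly a factor of $\frac12$ to buy sum-freeness through parity. The guiding observation is that a set consisting \emph{solely of odd residues} modulo an even number is automatically sum-free, since a sum of two odd numbers is even; yet a product of two odd numbers is again odd, so within the odd residues one remains free to impose any product-free condition. Thus the sum-free constraint and the product-free constraint can be decoupled across the prime $2$ and the odd part of the modulus.

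Concretely, I would first invoke the construction behind \cite{KLP2} to fix an \emph{odd} modulus $m$ carrying a product-free set $\B\subseteq\Z/m\Z$ with
\[
D(\B)=D(m)\ge 1-\frac{C}{(\log\log m)^{1-\frac\e2\log2}(\log\log\log m)^{1/2}}.
\]
Then set $n=2m$ and, using $\gcd(2,m)=1$ and the Chinese remainder isomorphism $\Z/n\Z\cong\Z/2\Z\times\Z/m\Z$, define
\[
\A=\{\,a\bmod n: a\equiv 1\mod{2}\ \text{and}\ (a\bmod m)\in\B\,\},
\]
so that $\A$ corresponds to $\{1\}\times\B$. Every element of $\A$ is odd, hence $a+a'$ is even and lies outside $\A$ for all $a,a'\in\A$; this gives $\A\cap(\A+\A)=\emptyset$ regardless of the choice of $\B$. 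For product-freeness, a product $aa'$ of two odd residues is odd and reduces mod $m$ to $(a\bmod m)(a'\bmod m)\in\B\cdot\B$, which is disjoint from $\B$ because $\B$ is product-free; hence $aa'\notin\A$ and $\A\cap(\A\cdot\A)=\emptyset$.

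Finally I would do the density bookkeeping: $|\A|=|\B|$ while $|\Z/n\Z|=2m$, so $D(\A)=\frac12 D(m)$, whence
\[
D'(n)\ge D(\A)=\frac12 D(m)\ge\frac12-\frac{C/2}{(\log\log m)^{1-\frac\e2\log2}(\log\log\log m)^{1/2}}.
\]
Since $n=2m$ gives $\log\log n=\log\log m+O(1/\log m)$ and likewise $\log\log\log n\sim\log\log\log m$, the denominators for $n$ and $m$ differ by a factor tending to $1$, so after slightly enlarging the constant to an admissible $\kappa'$ the asserted inequality holds for each such $n$, and hence for the infinitely many $n=2m$ produced by the infinitely many admissible $m$.

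The one genuine point requiring care — and the only real obstacle — is the insistence that the near-extremal product-free moduli of \cite{KLP2} be taken \emph{odd}, since the prime $2$ must be reserved for the sum-free constraint; note that an odd modulus is essential, because over an odd modulus a sum-free set has density bounded away from $\frac12$ (at most about $\frac13$), so parity really is the only source of a sum-free factor close to $\frac12$. This is nevertheless a mild issue: the lower-bound construction in \cite{KLP2} is built from the prime-factorization structure of the modulus, and passing to the odd part (equivalently, discarding the single prime $2$) perturbs the attained density by a negligible amount that is absorbed into the constant. Everything else, namely the Chinese remainder bookkeeping and the passage from $\log\log m$ to $\log\log n$, is routine.
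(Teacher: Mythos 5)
Your proposal is correct, and at bottom it is the same construction as the paper's: sum-freeness is bought with parity at a cost of the factor $\frac12$, and product-freeness comes from the $\Omega$-constrained divisor construction of \cite{KLP11}, \cite{KLP2}. The difference is organizational. You decompose $n=2m$ via the Chinese remainder theorem and want to quote, for an \emph{odd} modulus $m$, the bound $D(m)\ge 1-C/\bigl((\log\log m)^{1-\frac\e2\log2}(\log\log\log m)^{1/2}\bigr)$; but the cited theorem of \cite{KLP2} makes no parity claim about its near-extremal moduli (its examples $\ell_x^2$, with $\ell_x$ the lcm of the integers in $[1,x]$, are even), so this step is not a quotation but a claim needing proof --- which you correctly flag as the only real obstacle, and whose justification (dropping the prime $2$ multiplies $\varphi(m)/m$ by $2$ and shrinks the odd-divisor sum $\sum 1/d$ by a compensating factor close to $2$, leaving the density $1-o(1)$) is sound in substance. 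The paper sidesteps the need for such a variant: it works directly modulo $n_x=\ell_x^2$, takes $\A$ to be the residues whose gcd with $n_x$ is an odd divisor $d$ of $\ell_x$ with $k<\Omega(d)<2k$ (oddness of the gcd forces the residues themselves to be odd, hence $\A$ is sum-free since $n_x$ is even), and redoes the density estimate from scratch, using Mertens-type bounds together with the internal estimate (6.2) of \cite{KLP2} to control the loss from the excluded divisors, arriving at $D(\A)\ge\frac12-O\bigl(1/((\log\log n_x)^{1-\frac\e2\log2}(\log\log\log n_x)^{1/2})\bigr)$. So your route is a CRT repackaging of the paper's argument: it is modular and conceptually cleaner, but it requires establishing an odd-modulus version of the main theorem of \cite{KLP2}, whereas the paper needs only the published estimate (6.2) applied inside its explicit construction.
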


Note that $D'(5)=\frac25$ and if $5|n$, then $D'(n)\ge\frac25$.
A possibly interesting computational problem is to numerically
exhibit some $n$ with $D'(n)>\frac25$.  Theorem~\ref{prop-lower}
assures us that such numbers exist, but the least example might be
very large.

One might also ask for the densest possible set $\A$ for which $\A$, $\A+\A$, and $\A\cdot\A$
are pairwise disjoint.  However, Proposition~\ref{prop-spf} below implies immediately that any
sum-free, product-free set $\A\subset\Z/n\Z$ with $D(\A)>\frac25$ also has $\A+\A$ and
$\A\cdot\A$ disjoint.  Thus, from Theorem~\ref{prop-lower},
we may have these three sets pairwise disjoint with $D(\A)$ arbitrarily
close to~$\frac{1}{2}$.
\section{The upper density}

Here we prove Theorem \ref{prop-upperdens}.
We begin with some notation that we use in this section:
For a set of positive integers $\A$,
we write $\A(x)$ for $\A\cap[1,x]$.
If $a$ is an integer, we write $a+\A$ for $\{a\}+\A$.

\begin{lemma}
\label{lem-interval}
Suppose that $\A$ is a sum-free set of positive integers and that $I$ is an
interval of length $y$ in the positive reals.  Then
$$
|\A\cap I|\le \frac y2+O_{\A}(1).
$$
\end{lemma}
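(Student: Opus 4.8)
The plan is to exploit the defining property of a sum-free set through translation by its least element. Write $a_0$ for the least element of $\A$; this is the only feature of $\A$ on which the implied constant will depend. The key observation is that $a_0+\A$ is disjoint from $\A$: if some $a\in\A$ had $a_0+a\in\A$, then $a_0+a$ would be a sum of two elements of $\A$ lying in $\A$, contradicting sum-freeness.

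Next I would fix the interval and set up a short packing argument. Write $I=(t,t+y]$ (the precise nature of the endpoints is irrelevant for counting integers) and set $k=|\A\cap I|$. Translating by $a_0$, the set $a_0+(\A\cap I)$ consists of $k$ distinct integers, all lying in the interval $(t+a_0,\,t+y+a_0]$, and by the observation above none of them lies in $\A$; in particular $a_0+(\A\cap I)$ is disjoint from $\A\cap I$. Crucially, both $\A\cap I$ and its translate $a_0+(\A\cap I)$ are contained in the single interval $J:=(t,\,t+y+a_0]$.

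It follows that $J$ contains at least $2k$ distinct integers, namely the $k$ elements of $\A\cap I$ together with the $k$ elements of its translate. Since $J$ has length $y+a_0$, it contains at most $y+a_0+1$ integers, whence $2k\le y+a_0+1$ and so $k\le \frac{y}{2}+\frac{a_0+1}{2}$. As $a_0$ depends only on $\A$, this gives the claimed bound $|\A\cap I|\le \frac{y}{2}+O_{\A}(1)$.

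I do not anticipate a genuine obstacle here: the argument is elementary counting, and the only point requiring care is to keep the error term uniform in $I$ — it must depend on $\A$ (through $a_0$) but on neither the location nor the length $y$ of the interval. The decision to translate by the \emph{least} element $a_0$, rather than by an arbitrary element of $\A$, is precisely what keeps the overhang of the enlarged interval bounded and hence absorbs the error into $O_{\A}(1)$.
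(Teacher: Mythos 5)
Your proof is correct and follows essentially the same route as the paper: translate by the least element $a_0$, use sum-freeness to get disjointness of $\A$ from $a_0+\A$, and count integers in an interval. The only (immaterial) difference is bookkeeping — you enlarge the interval by $a_0$ to contain both $\A\cap I$ and its full translate, whereas the paper keeps both sets inside $I$ and accepts the loss of at most $a_0$ elements from the translate; both yield $2|\A\cap I|\le y+a_0+O(1)$.
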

\begin{proof}
Let $N=|\A\cap I|$.  For any positive integer $m$ we have $|(m+\A)\cap I|\ge |\A\cap I|-m$.  Let $a_0$ be the least element of $\A$.
Thus $|(a_0+\A)\cap I|\ge N-a_0$.  But $a_0+\A$ is disjoint from $\A$, so 
$$
2N-a_0\le|A\cap I|+|(a_0+\A)\cap I|\le|I\cap\Z|=y+O(1).
$$
Solving this inequality for $N$ proves the result.
\end{proof}

For a set $\A$ of positive integers and a real number $x>0$, let 
$$
\delta_x := 1- 2\frac{|\A(x)|}{x},~\hbox{ so that }~ |\A(x)|=\frac12(1-\delta_x)x.
$$
Note that $\delta_x\ge0$ for $|\A(x)| \le \frac{1}{2}.$
\begin{lemma}
\label{lem-a1a2}
Suppose that $\A$ is a sum-free set of positive integers and that
$a_1,a_2\in\A$.  Then for all $x>0$, 
$$
|(a_1+\A(x-a_1))\cap(a_2+\A(x-a_2))|\ge\frac12(1-3\delta_x)-(a_1+a_2).
$$
\end{lemma}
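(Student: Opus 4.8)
The plan is to prove this by inclusion--exclusion, placing the two translated sets inside a ``forbidden'' universe whose size is controlled by the sum-free hypothesis. Write $S=a_1+\A(x-a_1)$ and $T=a_2+\A(x-a_2)$. First I would observe that $S=(a_1+\A)\cap[1,x]$ and $T=(a_2+\A)\cap[1,x]$, since an element $a_i+a$ with $a\in\A$ lies in $[1,x]$ precisely when $a\le x-a_i$, i.e. $a\in\A(x-a_i)$. In particular $|S|=|\A(x-a_1)|$ and $|T|=|\A(x-a_2)|$.

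The key structural input is that, because $\A$ is sum-free and $a_1,a_2\in\A$, each of $a_1+\A$ and $a_2+\A$ is disjoint from $\A$. Hence both $S$ and $T$ lie in the universe $U:=(\Z\cap[1,x])\setminus\A(x)$ of integers of $[1,x]$ not belonging to $\A$. Since $\lfloor x\rfloor\le x$ and $|\A(x)|=\frac12(1-\delta_x)x$ by definition of $\delta_x$, this gives $|U|=\lfloor x\rfloor-|\A(x)|\le\frac12(1+\delta_x)x$.

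Next I would bound the individual sizes from below. Because the interval $(x-a_1,x]$ meets $\Z$ in only $a_1$ points, passing from $\A(x)$ to $\A(x-a_1)$ discards at most $a_1$ elements, so $|S|=|\A(x-a_1)|\ge|\A(x)|-a_1$, and likewise $|T|\ge|\A(x)|-a_2$. (This crude count suffices; Lemma~\ref{lem-interval} would give a slightly sharper estimate but is not needed here.) Combining via $|S\cap T|\ge|S|+|T|-|U|$ and substituting $2|\A(x)|=(1-\delta_x)x$ yields
$$
|S\cap T|\ge(1-\delta_x)x-(a_1+a_2)-\frac12(1+\delta_x)x=\frac12(1-3\delta_x)x-(a_1+a_2),
$$
which is the asserted estimate.

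There is essentially no analytic difficulty; the entire content is the ``halving'' of the ambient set. The one point to get right is that it is the sum-free disjointness $(a_i+\A)\cap\A=\emptyset$ that confines both $S$ and $T$ to $U$, whose size is about $\frac12(1+\delta_x)x$ rather than the trivial $x$. Without this observation the bound $|S|+|T|-x$ would only produce $-\delta_x x-(a_1+a_2)$, which is useless; replacing the ambient $x$ by $|U|$ is exactly what converts the inclusion--exclusion inequality into the stated one. I would also double-check the integer-part bookkeeping, but since $\lfloor x\rfloor\le x$ and $(x-a_i,x]$ meets $\Z$ in exactly $a_i$ points, no error terms survive.
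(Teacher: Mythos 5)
Your proof is correct and follows essentially the same route as the paper: inclusion--exclusion applied to the two translates, which sum-freeness confines to the complement of $\A(x)$ in $[1,x]$, combined with the crude count $|\A(x-a_i)|\ge|\A(x)|-a_i$. Note also that your conclusion carries the factor $x$ in $\frac12(1-3\delta_x)x-(a_1+a_2)$, which is indeed what the paper's own proof establishes (the lemma as stated omits the $x$, evidently a typo).
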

\begin{proof}
We have the sets $\A(x),a_1+\A(x-a_1),a_2+\A(x-a_2)$ all lying in
$[1,x]$ and the latter two sets are disjoint from the first set
(since $\A$ is sum-free).  Thus,
\begin{align*}
|(a_1&+\A(x-a_1))\cap(a_2+\A(x-a_2))|\\
&=|a_1+\A(x-a_1)|+|a_2+\A(x-a_2)|-|(a_1+\A(x-a_1))\cup(a_2+\A(x-a_2))|\\
&\ge |a_1+\A(x-a_1)|+|a_2+\A(x-a_2)|-(x-|\A(x)|)\\
&\ge (|\A(x)|-a_1) +( |A(x)| - a_2)+ (|A(x)|-x) = ~3 |A(x)| - x - (a_1+a_2).
\end{align*}
But $3|\A(x)|-x=\frac12(1-3\delta_x)x$, so this completes the proof.
\end{proof}

For a set $\A$ of positive integers, define the {\em difference set}
$$
\Delta\A :=\{a_1-a_2:a_1,a_2\in\A\}.
$$
Further, for an integer $g$, let 
$$
\A_g :=   \A\cap(-g+\A)     =\{a\in\A:a+g\in\A\}.
$$

\begin{cor}
\label{cor-shiftest}
If $\A$ is a sum-free set of positive integers and $g\in\Delta\A$
then, for any $x>0$, 
$$
|\A_g(x)|\ge\frac{1}{2}(1-3\delta_x)\,x+O(1),
$$
in which  the implied constant
depends on both $g$ and $\A$.
\end{cor}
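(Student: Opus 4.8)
The plan is to realize $\A_g(x)$ as being at least as large as the intersection set appearing in Lemma~\ref{lem-a1a2}, and then to quote that lemma directly. Since $g\in\Delta\A$, I would first fix a representation $g=a_1-a_2$ with $a_1,a_2\in\A$; this is the only place the hypothesis $g\in\Delta\A$ enters, and it pins down $a_1,a_2$ (hence $a_1+a_2$) as constants depending only on $g$ and $\A$.

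Next I would set up the correspondence between the intersection and $\A_g$. Take any $y$ in $(a_1+\A(x-a_1))\cap(a_2+\A(x-a_2))$ and write $y=a_1+b=a_2+c$ with $b\in\A(x-a_1)$ and $c\in\A(x-a_2)$. Subtracting the two expressions gives $c-b=a_1-a_2=g$, so $b\in\A$ and $b+g=c\in\A$; that is, $b\in\A_g$. Moreover $b=y-a_1\le x-a_1\le x$, so in fact $b\in\A_g(x)$. The map $y\mapsto b=y-a_1$ is visibly injective (indeed it is a bijection onto $\A_g(x-a_1)$, since once $g=a_1-a_2$ the two constraints $b\le x-a_1$ and $c=b+g\le x-a_2$ coincide). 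Hence
$$
|\A_g(x)|\ge |(a_1+\A(x-a_1))\cap(a_2+\A(x-a_2))|.
$$

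Finally I would invoke Lemma~\ref{lem-a1a2} to bound the right-hand side below by $\frac12(1-3\delta_x)x-(a_1+a_2)$. Since $a_1+a_2$ depends only on $g$ and $\A$, it is absorbed into an $O(1)$ term whose implied constant depends on $g$ and $\A$, which is exactly the stated conclusion. There is no serious obstacle here; the translation is routine, and the only points deserving a moment's care are (i) checking that the two shift ranges line up so that membership in the intersection forces $b\in\A_g$ with $b\le x$, and (ii) confirming that the loss $a_1+a_2$ is a genuine constant rather than a quantity growing with $x$. Both are immediate once the representation of $g$ is fixed at the outset.
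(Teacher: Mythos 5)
Your proof is correct and follows essentially the same route as the paper: fix a representation $g=a_1-a_2$ with $a_1,a_2\in\A$, observe that translating the intersection $(a_1+\A(x-a_1))\cap(a_2+\A(x-a_2))$ by $-a_1$ lands it inside $\A_g(x-a_1)\subseteq\A_g(x)$, and then apply Lemma~\ref{lem-a1a2}, absorbing the constant $a_1+a_2$ into the $O(1)$ term. Your added remarks (the injectivity/bijectivity of the shift and the constancy of $a_1+a_2$) are accurate but merely make explicit what the paper leaves implicit.
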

\begin{proof}
Suppose that $g\in\Delta\A$, so that there exist  $a_1,a_2\in\A$  such that
$a_1-a_2=g$.  If $a\in\A(x-a_1)$ and $a+a_1\in a_2+\A(x-a_2)$,
then $a+g=a+a_1-a_2\in\A$, so that $a\in\A_g$.  That is,
$\A_g(x-a_1)$ contains $-a_1+(a_1+\A(x-a_1))\cap(a_2+\A(x-a_2))$.  
Thus, by Lemma~\ref{lem-a1a2},
$$
|\A_g(x-a_1)|\ge|(a_1+\A(x-a_1))\cap(a_2+\A(x-a_2))|\ge\frac{1}{2}(1-3\delta_x)-(a_1+a_2),
$$
from which the corollary follows.
\end{proof}
\begin{cor}
\label{cor-twofifths}
If $\A$ is a sum-free set of positive integers with upper density
greater than $\frac{2}{5}$, then $\Delta\A$ is a subgroup of $\Z$.
\end{cor}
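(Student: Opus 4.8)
The plan is to show that $\Delta\A$ is closed under subtraction; since $\Delta\A$ is nonempty (it contains $0=a-a$), this alone forces it to be a subgroup of $\Z$. Concretely, I would fix $g,h\in\Delta\A$ and try to produce a single integer $a$ with $a,\,a+g,\,a+h\in\A$, for then $(a+g)-(a+h)=g-h$ exhibits $g-h\in\Delta\A$.

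Finding such an $a$ amounts to showing $\A_g\cap\A_h\neq\emptyset$, since $a\in\A_g\cap\A_h$ means precisely that $a,\,a+g,\,a+h\in\A$. This is where the density hypothesis enters. First I would translate ``upper density greater than $\frac25$'' into a statement about $\delta_x$: there is a constant $\beta<\frac15$ and an unbounded set of values $x$ for which $|\A(x)|\ge\frac12(1-\beta)x$, i.e.\ $\delta_x\le\beta$. Along this sequence of $x$, Corollary~\ref{cor-shiftest} supplies the lower bounds $|\A_g(x)|,\,|\A_h(x)|\ge\frac12(1-3\delta_x)x+O(1)$.

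The key step is a counting argument carried out \emph{inside} $\A(x)$. Since $\A_g,\A_h\subseteq\A$, both $\A_g(x)$ and $\A_h(x)$ are subsets of $\A(x)$, and $|\A(x)|=\frac12(1-\delta_x)x$. Inclusion--exclusion then gives
$$
|\A_g(x)\cap\A_h(x)|\ge|\A_g(x)|+|\A_h(x)|-|\A(x)|\ge\Big(\tfrac12-\tfrac52\delta_x\Big)x+O(1).
$$
This is exactly where the threshold $\frac25$ is forced: the coefficient $\frac12-\frac52\delta_x$ is positive precisely when $\delta_x<\frac15$, equivalently when $|\A(x)|>\frac25x$. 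Using $\delta_x\le\beta<\frac15$ along our sequence, the right-hand side is bounded below by a fixed positive multiple of $x$, hence exceeds $0$ once $x$ is large. Therefore $\A_g(x)\cap\A_h(x)$ is nonempty for some such $x$, yielding $g-h\in\Delta\A$.

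Finally I would record the elementary fact that a nonempty subset of $\Z$ closed under subtraction is automatically a subgroup: it contains $0=g-g$, is closed under negation via $-g=0-g$, and closed under addition via $g+h=g-(-h)$. Applying this to $\Delta\A$ completes the proof. The only genuine obstacle is arranging the counting estimate so that it beats the trivial overlap bound, and this is settled by the inclusion--exclusion computation above, whose positivity is equivalent to the upper density exceeding $\frac25$.
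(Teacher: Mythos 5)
Your proof is correct and is essentially the paper's own argument: both rest on Corollary~\ref{cor-shiftest} plus the observation that two of the shift sets are subsets of $\A$, so their overlap inside $\A(x)$ is forced to be nonempty exactly when $\delta_x<\frac15$, i.e.\ when the density exceeds $\frac25$. The only difference is bookkeeping: the paper proves closure under addition by intersecting $g_1+\A_{g_1}$ with $\A_{g_2}$ and derives a contradiction from disjointness, while you prove closure under subtraction by intersecting $\A_g$ with $\A_h$ via inclusion--exclusion; the counting is identical.
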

\begin{proof}
Since $\Delta\A$ is closed under multiplication by $-1$, it
suffices to show that if $g_1,g_2\in\Delta\A$, then $g_1+g_2\in\Delta\A$.  
If $g_1+\A_{g_1}$ contains a member $a$ of $\A_{g_2}$,
then $a-g_1\in\A$ and $a+g_2\in\A$, so that $g_1+g_2\in\Delta\A$.
Note that $g_1+\A_{g_1}$ and $\A_{g_2}$ are both subsets of $\A$.
Now by Corollary~\ref{cor-shiftest}, if $g_1+\A_{g_1}$ and $\A_{g_2}$
were disjoint, we would have for each positive real number $x$,
$$
(1-3\delta_x)x+O(1)\le\frac12(1-\delta_x)x,
$$
so that $\delta_x\ge\frac15+O(\frac1x)$.   Hence $\liminf\delta_x\ge\frac15$, contradicting the
assumption that $\A$ has upper density greater than $\frac25$.  Thus, $g_1+\A_{g_1}$ and $\A_{g_2}$
are not disjoint, which as we have seen, implies that $g_1+g_2\in\Delta\A$.  This completes the proof.
\end{proof} 

\begin{remark}
Corollary \ref{cor-twofifths} is best possible, as can be seen by taking $\A$ as the set 
of positive integers that are either~2 or 3~modulo~5.
\end{remark}

We now prove the following result which immediately implies Theorem~\ref{prop-upperdens}.
\begin{prop}
\label{prop-upperdensbetter}
Suppose that $\A$ is a sum-free set of positive integers with least member $a_0$.
Suppose in addition that $\{a_0\}\cdot\A$ is disjoint from $\A$.  Then the upper density of $\A$
is at most $\frac12\left(1-\frac1{5a_0}\right)$.
\end{prop}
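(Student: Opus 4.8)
The plan is to argue by contradiction: suppose the upper density of $\A$ exceeds $\frac12\left(1-\frac1{5a_0}\right)$. Since $1\in\A$ would force $a_0\A\cap\A\supseteq\{a_0\}$, we have $a_0\ge2$, so the assumed density exceeds $\frac12\left(1-\frac1{10}\right)=\frac9{20}>\frac25$. Hence Corollary~\ref{cor-twofifths} applies and $\Delta\A=d\Z$ for some $d\ge1$. Because every element of $\A$ is congruent to $a_0$ modulo $d$, we have $|\A(x)|\le x/d+O(1)$, so the upper density is at most $1/d$; combined with its exceeding $\frac25$ this forces $d\in\{1,2\}$. Throughout I would use two inequalities valid for all $x$. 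First, applying Lemma~\ref{lem-interval} to the interval $(x/a_0,x]$ gives $|\A(x)|-|\A(x/a_0)|\le\frac{x}{2}\left(1-\frac1{a_0}\right)+O(1)$, which in terms of the quantities $\delta_x$ rearranges to the scale relation $\delta_{x/a_0}\le a_0\,\delta_x+O(1/x)$. Second, $\A$ is disjoint from its dilate $a_0\A$ by hypothesis, and $|a_0\A\cap[1,x]|=|\A(x/a_0)|$.

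For $d=2$ the result is immediate and in fact stronger. Since $a_0^2\equiv a_0\pmod{2}$, both $\A$ and $a_0\A$ lie in the single residue class $a_0\bmod 2$, whose members in $[1,x]$ number $x/2+O(1)$; as the two sets are disjoint, $|\A(x)|+|\A(x/a_0)|\le\frac{x}{2}+O(1)$. Adding this to the interval inequality cancels the term $|\A(x/a_0)|$ and yields $|\A(x)|\le\frac{x}{2}\left(1-\frac1{2a_0}\right)+O(1)$, so the upper density is at most $\frac12\left(1-\frac1{2a_0}\right)$, contradicting the assumption.

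The case $d=1$ is the crux. Here I would pack three subsets of $[1,x]$: the set $\A$ itself, the translate $a_0+\A$ (disjoint from $\A$ because $\A$ is sum-free and $a_0\in\A$), and the dilate $a_0\A$ (disjoint from $\A$ by hypothesis). The first set is disjoint from each of the other two, so both $a_0+\A$ and $a_0\A$ live in the complement of $\A$, giving $2|\A(x)|+|\A(x/a_0)|\le x+\mathrm{ov}+O(1)$, where $\mathrm{ov}=|(a_0+\A)\cap a_0\A\cap[1,x]|$. The main obstacle is that a priori $\mathrm{ov}$ could be as large as $|\A(x/a_0)|$, wiping out the entire gain from the dilate. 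The key observation that breaks this is that an element of the overlap has the form $a_0 b=a_0+a$ with $a,b\in\A$, forcing $a=a_0(b-1)\in\A$; if in addition $b-1\in\A$, then $a_0(b-1)\in a_0\A$ would be forbidden from $\A$. Thus every $b$ counted by $\mathrm{ov}$ satisfies $b-1\notin\A$, whence $\mathrm{ov}\le|\A(x/a_0)|-|\A_1(x/a_0)|+O(1)$. Now Corollary~\ref{cor-shiftest} with $g=1\in\Delta\A$ bounds $|\A_1(x/a_0)|$ from below, and a short computation gives $\mathrm{ov}\le\frac{\delta_{x/a_0}}{a_0}\,x+O(1)$, which is small precisely when the density is high.

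Substituting this overlap bound into the packing inequality, rewriting everything through $\delta_x$ and $\delta_{x/a_0}$, and finally invoking the scale relation $\delta_{x/a_0}\le a_0\,\delta_x$ to eliminate $\delta_{x/a_0}$, the estimates collapse to $\frac{1}{2a_0}\le\frac{5}{2}\,\delta_x+O(1/x)$, that is, $\delta_x\ge\frac{1}{5a_0}+O(1/x)$ for all large $x$. Equivalently, the upper density is at most $\frac12\left(1-\frac1{5a_0}\right)$, contradicting the assumption and completing the proof; the constant $\frac15$ emerges here for exactly the same packing reason as in Corollary~\ref{cor-twofifths}. I expect the overlap estimate in the $d=1$ case to be the one genuinely delicate point, since it is what couples the multiplicative information (which naturally lives at the scale $x/a_0$) to the additive density at scale $x$, with the interval lemma's scale relation serving as the bridge.
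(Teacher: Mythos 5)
Your proof is correct and is essentially the paper's own argument: both reduce (after noting the density exceeds $\frac25$) via Corollary~\ref{cor-twofifths} to $\Delta\A\in\{\Z,2\Z\}$, handle the case $\Delta\A=2\Z$ by packing the disjoint sets $\A$ and $\{a_0\}\cdot\A$ into a single residue class modulo~$2$ together with Lemma~\ref{lem-interval}, and in the crucial case $\Delta\A=\Z$ rest on exactly the same key observation — that $a_0b=a_0+a$ forces $a=a_0(b-1)\in\A$, so $b-1\notin\A$ by the product-freeness hypothesis — combined with Corollary~\ref{cor-shiftest}, arriving at the identical inequality $\frac52\delta_x\ge\frac1{2a_0}+O(1/x)$. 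The only differences are bookkeeping: the paper phrases the key observation as pairwise disjointness of the three sets $\A(x)$, $a_0+\A(x-a_0)$, $\{a_0\}\cdot\A_{-1}(x/a_0)$ and applies Corollary~\ref{cor-shiftest} at scale $x$, whereas you pack the full dilate $\{a_0\}\cdot\A$ with an explicit overlap term and apply the corollary at scale $x/a_0$, bridging via the rescaling $\delta_{x/a_0}\le a_0\delta_x+O(1/x)$ (itself just Lemma~\ref{lem-interval} on $(x/a_0,x]$, which is also how the paper passes between scales).
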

\begin{proof}
If the upper density of $\A$ is at most $\frac25$, the result holds trivially, so we may
assume the upper density exceeds $\frac25$.  It follows from Corollary~\ref{cor-twofifths}
that $\Delta\A$ is the set of multiples of some positive number $g$, which is necessarily
either~1 or~2.  (If $g\ge3$, then the upper density of $\A$ would be at most $\frac13$.)

Suppose that $g=2$ so that $\Delta\A$ consists of all even numbers.  Then either $\A$ consists
of all even numbers or all odd numbers.  In the former case, the set $\{\frac12\}\cdot\A$
is a sum-free set of positive integers, and so has upper density at most~$\frac12$.  It follows
that $\A$ has upper density at most~$\frac14$, a contradicition.

Now suppose that $\A$ consists solely of odd numbers.  For any real number $x\ge a_0$, 
both $\{a_0\}\cdot\A(x/a_0)$ and $\A(x)$ consist only of odd numbers, they are disjoint,
and they lie in $[1,x]$.  Thus, by Lemma~\ref{lem-interval},
$$
|\A(x)|\le\frac12x-\left|\A\left(\frac x{a_0}\right)\right|+O(1)~\hbox{ and }~
|\A(x)|\le\frac12\left(x-\frac x{a_0}\right)+\left|\A\left(\frac x{a_0}\right)\right|+O(1).
$$
Adding these two inequalities and dividing by 2 gives that $|\A(x)|\le(\frac12-\frac1{2a_0})x+O(1)$,
so that $\A$ has upper density at most $\frac12-\frac1{2a_0}$, giving the result in this case. 

It remains to consider the case that $g=1$, that is, $\Delta\A=\Z$.
Let $x\ge a_0$ be any real number and 
consider the two sets $a_0+\A(x-a_0)$ and $\{a_0\}\cdot\A_{-1}(x/a_0)$.  They both lie in $[1,x]$ and
by hypothesis  are both disjoint from $\A(x)$.  If these two sets share an element in common
then there would be some $a\in\A_{-1}(x/a_0)$ with $a_0a-a_0\in\A(x-a_0)$.  
In this case $a_0(a-1)\in\A$ and
also $a_0\in\A$ and $a-1\in\A$, which contradicts our hypothesis.  We conclude that  the three sets
$a_0+\A(x-a_0)$, $\{a_0\}\cdot\A_{-1}(x/a_0)$, and $\A(x)$ must be pairwise disjoint.  Thus,
$$
\left|\A_{-1}\left(\frac{x}{a_0}\right)\right|
=\left|\{a_0\}\cdot\A_{-1}\left(\frac{x}{a_0}\right)\right|
\le x-|\A(x)|-|a_0+\A(x-a_0)|=\delta_xx+O(1).
$$
On the other hand, using Lemma~\ref{lem-interval} and Corollary~\ref{cor-shiftest}, 
\begin{align*}
\left|\A_{-1}\left(\frac x{a_0}\right)\right|&\ge|\A_{-1}(x)|-\frac12\left(1-\frac1{a_0}\right)x+O(1)\\
&\ge\frac12(1-3\delta_x)x-\frac12\left(1-\frac1{a_0}\right)x+O(1)
=\left(\frac1{2a_0}-\frac32\delta_x\right)x+O(1).
\end{align*}
Putting these two inequalities together and dividing by $x$, we obtain
$$
\delta_x\ge\frac1{2a_0}-\frac32\delta_x+O\left(\frac1x\right).
$$
Hence $\delta_x\ge\frac1{5a_0}+O(\frac1x)$, implying that
$\liminf\delta_x\ge\frac1{5a_0}$, whence
$$
\bar{d}(\A) = \limsup_{x \to \infty} \frac{1}{x} |\A(x)|
=\limsup_{x \to \infty} \frac{1}{2}(1-\delta_x)
= \frac{1}{2} -  \liminf_{x \to\infty} \frac{1}{2}\delta_x \ge \frac{1}{2}(1- \frac{1}{5a_0}),
$$
which proves the proposition.
\end{proof}

\section{An upper bound for the density in $\Z/n\Z$}

In this section we prove Theorem \ref{prop-upper}.
We use the following  theorem of Kneser~\cite{K}; see also \cite[Theorem~5.5]{TV}.
\begin{theorem}[Kneser]
\label{th-Kneser}
Suppose in an abelian group $G$ (written additively) we have finite nonempty sets $\A,\B,\C$
where $\A+\B=\C$.  Let $H$ be the stabilizer of $\C$ in $G$, that is, $H$ is the subgroup
of elements $g\in G$ with $g+\C=\C$.  Then
$$
|\C|\ge|\A+H|+|\B+H|-|H|\ge|\A|+|\B|-|H|.
$$
\end{theorem}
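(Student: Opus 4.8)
The second inequality carries no content: since $0\in H$ we have $\A\subseteq\A+H$ and $\B\subseteq\B+H$, so $|\A+H|\ge|\A|$ and $|\B+H|\ge|\B|$. The plan is thus to prove only the first inequality, by induction on $|\B|$ using the Dyson transform, together with a passage to a quotient group to dispose of the degenerate case. Throughout write $\C=\A+\B$ and $H=H(\C)$ for its stabilizer.

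The basic tool is the Dyson transform: for $e\in G$ set $\A_e=\A\cup(e+\B)$ and $\B_e=\B\cap(\A-e)$. A direct check gives $\A_e+\B_e\subseteq\C$, then $|\A_e|+|\B_e|=|\A|+|\B|$ (since $e+\B_e=(e+\B)\cap\A$, the elements lost in forming the union $\A_e$ are exactly those counted by $\B_e$), and $\B_e\subseteq\B$; when $|\A|,|\B|\ge2$ one can choose $e\in\A-\B$ making $\B_e$ a nonempty \emph{proper} subset of $\B$. The base case $|\B|=1$ is trivial, since then $\C$ is a translate of $\A$ and the first inequality is an equality. For the inductive step, applying the inductive hypothesis to $(\A_e,\B_e)$ gives $|\A_e+\B_e|\ge|\A|+|\B|-|H_e|$ with $H_e=H(\A_e+\B_e)$, and as $\A_e+\B_e\subseteq\C$ this yields $|\C|\ge|\A|+|\B|-|H_e|$. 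If $H_e\subseteq H$ always held, the induction would close immediately.

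To handle the case where the transformed sumset acquires a larger stabilizer, I would use a quotient reduction. Since $\C$ is a union of cosets of $H$ and $(\A+H)+(\B+H)=\C$ with $\A+H,\B+H$ again $H$-periodic, replacing $\A,\B$ by $\A+H,\B+H$ changes none of $\C$, $|\A+H|$, $|\B+H|$, $|H|$; so I may assume $\A,\B$ are $H$-periodic and pass to $\bar G=G/H$. There $\bar\A+\bar\B=\bar\C$, $|\A|=|H|\,|\bar\A|$, and the stabilizer of $\bar\C$ in $\bar G$ is \emph{trivial}. The theorem thus reduces to the core statement that if $\A+\B=\C$ has trivial stabilizer then $|\C|\ge|\A|+|\B|-1$, and the point of the quotient is that any unavoidable extra periodicity produced by the transforms becomes a nontrivial stabilizer of $\bar\C$, a contradiction.

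The hard part will be the dichotomy that binds these together: one must show that either some admissible $e$ shrinks $\B$ with $H_e\subseteq H$, so the inductive bound suffices, or else every admissible transform forces periodicity, which via the quotient contradicts triviality of the stabilizer of $\bar\C$. Proving this lemma — controlling $H_e$ in terms of $H$ across the transform — is where essentially all the effort lies, the transform identities and the quotient reduction being routine by comparison. As this is the classical theorem of Kneser, in the paper it is quoted from \cite{K} and \cite{TV} rather than reproved here.
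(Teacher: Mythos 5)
The paper does not prove this statement at all: it is Kneser's theorem, imported as a known tool with a citation to \cite{K} and \cite[Theorem~5.5]{TV}, so there is no internal argument to compare yours against; what matters is whether your outline stands on its own. It does not, and you essentially concede this yourself. The routine parts are fine: the second inequality from $0\in H$; the transform identities $\A_e+\B_e\subseteq\C$ and $|\A_e|+|\B_e|=|\A|+|\B|$; the base case; and the quotient reduction (after replacing $\A,\B$ by $\A+H,\B+H$, the stabilizer of $\bar\C$ in $G/H$ is indeed trivial, and the trivial-stabilizer bound $|\bar\C|\ge|\bar\A|+|\bar\B|-1$ pulls back to $|\C|=|H|\,|\bar\C|\ge|\A+H|+|\B+H|-|H|$). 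But one intermediate claim is false as stated: for $|\A|,|\B|\ge2$ it is \emph{not} always possible to choose $e\in\A-\B$ with $\B_e$ a proper subset of $\B$ --- if $\A$ and $\B$ are cosets of the same subgroup $K$, then $\B_e=\B$ for every admissible $e$. (In the quotient this degenerate branch does resolve: if no $e$ shrinks $\bar\B$, then $\bar\A$ is periodic under $\langle\bar\B-\bar\B\rangle$, hence so is $\bar\C$, and triviality of its stabilizer forces $|\bar\B|=1$, the base case; but you never assemble this correction.)

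The decisive gap is the inductive step itself: applying the hypothesis to the transformed pair gives only $|\C|\ge|\A|+|\B|-|H_e|$, where $H_e$ is the stabilizer of $\A_e+\B_e$, and nothing in your proposal controls $H_e$. After the quotient reduction you need the bound with $-1$; if the transformed sumset acquires a nontrivial stabilizer, your inequality is strictly weaker than required, and no contradiction with the trivial stabilizer of $\bar\C$ is available, because $H_e$ stabilizes $\A_e+\B_e$, which may be a proper subset of $\bar\C$ --- a subset can be highly periodic even when the full sumset is not. Handling exactly this case, by a secondary induction, a maximal-counterexample/saturation argument, or the case analysis in \cite{K} or \cite[Theorem~5.5]{TV}, is the entire mathematical content of Kneser's theorem. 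You explicitly label this ``the hard part'' and defer it, so what you have is a correct plan for the classical Dyson-transform proof, not a proof. Since the paper itself treats the theorem as a black box, the course consistent with the paper would be to cite it; as a self-contained argument, your proposal is missing its core.
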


We next deduce restrictions on the structure of sum-free sets having density greater than~$\frac{2}{5}.$
\begin{prop}
\label{prop-spf}
Suppose that $n$ is a positive integer and $\A\subset\Z/n\Z$ is sum-free.
If $D(\A)>\frac{2}{5}$, then $n$ is even and $\A$ is a subset of the odd residues in $\Z/n\Z$.
\end{prop}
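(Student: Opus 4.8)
The plan is to apply Kneser's theorem to the sumset and then descend to a quotient group of small order. First I would set $\C := \A+\A$ and let $H$ be its stabilizer. Since $\A$ is sum-free, $\C$ is disjoint from $\A$, whence $|\C|\le n-|\A|$. Kneser's theorem (Theorem~\ref{th-Kneser}) applied with $\B=\A$ gives $|\C|\ge 2|\A|-|H|$. Combining the two bounds yields $|H|\ge 3|\A|-n$, and the hypothesis $|\A|=D(\A)\,n>\frac25 n$ then forces $|H|>\frac15 n$. Consequently the index $m:=[\Z/n\Z:H]$ satisfies $m\le 4$.

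Next I would push everything down to the quotient $G/H$, which is \emph{cyclic} of order $m$ because $\Z/n\Z$ is cyclic. Writing $\pi$ for the quotient map and $\bar\A:=\pi(\A)$, the key observation is that $\C$, being stabilized by $H$, is a union of $H$-cosets, so $\pi^{-1}(\pi(\C))=\C$; hence $\A\cap\C=\emptyset$ yields $\bar\A\cap(\bar\A+\bar\A)=\emptyset$, i.e. $\bar\A$ is sum-free in $G/H$. Moreover $\A\subseteq\pi^{-1}(\bar\A)$ and every fiber of $\pi$ has size $|H|$, so $D(\A)\le |H|\,|\bar\A|/n=|\bar\A|/m=D(\bar\A)$. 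Thus $\bar\A$ is a sum-free subset of $\Z/m\Z$ of density exceeding $\frac25$.

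Finally I would conclude by a short case analysis over $m\in\{1,2,3,4\}$. A sum-free set never contains $0$ (as $0+0=0$), which rules out $m=1$; in $\Z/3\Z$ every sum-free set has at most one element, so its density is at most $\frac13<\frac25$, ruling out $m=3$. For $m=2$ the only sum-free set of positive density is $\{1\}$, and for $m=4$ the density bound forces $|\bar\A|\ge 2$, and one checks directly that $\{1,3\}$ is the only sum-free set of that size (here the Klein four-group cannot occur, by cyclicity of $\Z/n\Z$). In either surviving case $m\in\{2,4\}$ divides $n$, so $n$ is even, and $\bar\A$ consists of odd residues of $\Z/m\Z$; pulling back through $\pi$ shows that $\A$ lies among the odd residues of $\Z/n\Z$, as claimed.

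The main obstacle is the first two steps: extracting the clean index bound $m\le 4$ from Kneser's theorem, and correctly verifying that the projected set $\bar\A$ inherits sum-freeness, which hinges on $\C$ being exactly a union of $H$-cosets. Once the quotient has order at most $4$ and $\bar\A$ is sum-free of density greater than $\frac25$, the remaining enumeration is routine, the only real subtlety being the exclusion of the Klein four-group via cyclicity.
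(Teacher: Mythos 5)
Your proof is correct, and it takes a genuinely different --- in fact more direct --- route than the paper's. You extract from Kneser's theorem the quantitative bound $|H|\ge 3|\A|-n>\frac15 n$, hence the index $m=[\Z/n\Z:H]$ is at most $4$, and you finish by inspecting sum-free sets of density greater than $\frac25$ in $\Z/m\Z$ for $m\le 4$. The paper instead uses Kneser's theorem only to conclude that the stabilizer is \emph{nontrivial}, and runs an induction on even $n$: it first cites the Diananda--Yap classification \cite{DY69} to deduce that $n$ must be even, projects to $\Z/h\Z$ for a (possibly large) proper divisor $h$, shows $h$ is even again via the density bound for odd moduli, and invokes the induction hypothesis, with $n=2,4$ as base cases. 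The two arguments share their engine: both prove that the projected set stays sum-free because $\C=\A+\A$ is stabilized by $H$, hence a union of $H$-cosets, so membership of a sum in $\pi(\C)$ lifts back into $\C$. What your version buys is self-containedness (no appeal to \cite{DY69}, no induction), at the cost of a short case check in groups of order at most $4$, where your cyclicity remark correctly excludes the Klein four-group; what the paper's version buys is a shorter endgame, since the induction hypothesis does the work of your case analysis. One point worth making fully explicit in your write-up: the identification of the quotient with $\Z/m\Z$ should be taken as reduction modulo $m$ (legitimate because a cyclic group has a unique subgroup of each order, namely $H=m\Z/n\Z$), so that, $m$ being even, any residue mapping to an odd residue modulo $m$ is itself odd modulo $n$ --- this is the same parity observation the paper uses when descending modulo $h$.
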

\begin{proof}
It follows from \cite{DY69}  that $D(\A) > \frac{2}{5}$ implies $n$ must be even (see the comments in
Section~\ref{sec:introduction}).   
The result  holds for $n=2$ since a sum-free set cannot contain $0$.
It also holds for $n=4$ since the double of an odd residue is $2$, so
the only option for $\A$ is $\{1,3\}$.
We now suppose $n \ge 6$  is even and proceed by induction 
assuming that the proposition holds for all even numbers smaller than $n$.
Let $\C$ denote the set
of residues mod~$n$ of the form $a+b$, where $a,b\in\A$.  Since $|\A|>\frac25n$
and $\A$ is sum-free, we have $|\C|<\frac35n$, so that $|\C|<2|\A|-1$ (using $n\ge6$).  We apply
Kneser's theorem in the group $G=\Z/n\Z$ to conclude that the stabilizer $H$ of $\C$ must be nontrivial.
Thus $H=\langle h\rangle$,
where $h|n$ and $h<n$.  

Next note  that if $\psi$ denotes the projection map of $\Z/n\Z$ to $\Z/h\Z$,
then $\psi(\A)$ is still sum-free.  To show this, suppose not, whence there are  $a_1,a_2,a_3\in\A$ and $\psi(a_1)
+\psi(a_2)=\psi(a_3)$. Now  $a_3\equiv a_1+a_2\pmod h$, so that there is some $c\in\C$ (namely $c=a_1+a_2$)
with $a_3\in H+c$.  But $H+c\subset H+\C=\C$, so $a_3=a_1'+a_2'$ for some $a_1',a_2'\in\A$, contradicting
the assumption that $\A$ is sum-free.  

The projection map $\psi$ is $n/h$ to $1$,
so $|\psi(\A)|\ge |\A|/(n/h)>\frac{2}{5}h$, and this  implies that $h$ must be even.
By the induction hypothesis $\psi(\A)$ cannot contain any even residues
modulo~$h$.  But even residues in $\A$ reduce to even residues modulo~$h$, so $\A$ cannot
contain any even residues modulo $n$.  This completes the proof.
\end{proof}

We now prove Theorem \ref{prop-upper}.
For those $n$ with $D'(n)\le\frac25$, the result holds for any number $\kappa$, so assume 
that $D'(n)>\frac25$.
Let $\A\subset\Z/n\Z$ be a product-free, sum-free set with $D(\A)=D'(n)$.  
By Proposition~\ref{prop-spf}, we have that $n$ is even and that
$\A$ is a subset of the odd residues modulo~$n$.  
Suppose that $k$ is an integer with $n\le 2^k<2n$.  Let $N=2^{2k}n$ and let $\B$ be the set of positive integers of the
form $2^jb$ where $j\le k$ and $b\le N/2^j=2^{2k-j}n$, such there is some $a\in\A$ with
$b\equiv a\pmod n$.  Then the members of $\B$ are in $[1,N]$ and
\begin{equation}
\label{lowerB}
|\B|=\sum_{j=0}^k2^{2k-j}|\A|=2^k\left(2^{k+1}-1\right)|\A|>\left(1-\frac1n\right)2^{2k+1}|\A|.
\end{equation}

We note that $\B$ is product-free as a set of residues modulo~$N$.
Indeed, suppose $2^{j_i}b_i \in\B$, for $i=1,2,3$ and 
$$
2^{j_1}b_1 2^{j_2}b_2\equiv 2^{j_3}b_3\pmod{N}.
$$
Let $a_i\in\A$ be such that $b_i\equiv a_i\pmod n$ for $i=1,2,3$.  We have that $a_1,a_2,a_3$ are
odd, and since $n$ is even, this implies that $b_1,b_2,b_3$ are odd.
Using $j_1+j_2\le 2k, j_3\le k$ and $2^{2k}|N$, we have
$j_1+j_2=j_3$.  Hence $a_1a_2\equiv a_3\pmod{n}$, a violation of the assumption that
$\A$ is product-free modulo~$n$.  We conclude that $\B$ is product-free modulo~$N$.

It now follows from Theorem~1.1 in~\cite{KLP2} that for $n$ sufficiently large,
$$
|\B|\le N\left(1-\frac{c}{(\log\log N)^{1-\frac{\rm e}2\log2}(\log\log\log N)^{1/2}}\right).
$$
Further, since $N$ is of order of magnitude $n^3$, we have that $\log\log N=\log\log n+O(1)$,
and so for any fixed choice of $c_0<c$ we have for $n$ sufficiently large that
$$
|\B|\le N\left(1-\frac{c_0}{(\log\log n)^{1-\frac{\rm e}2\log2}(\log\log\log n)^{1/2}}\right).
$$
Thus, from our lower bound for $|\B|$ in \eqref{lowerB} we have
$$
|\A|<\frac{N}{2^{2k+1}}\left(1-\frac1n\right)^{-1}
\left(1-\frac{c_0}{(\log\log n)^{1-\frac{\rm e}2\log2}(\log\log\log n)^{1/2}}\right).
$$
Since $N/2^{2k+1}=n/2$, it follows that for any fixed $c_1<c_0$ and $n$ sufficiently large,
we have
$$
|\A|<\frac n2
\left(1-\frac{c_1}{(\log\log n)^{1-\frac{\rm e}2\log2}(\log\log\log n)^{1/2}}\right).
$$
We thus may choose $\kappa$ as any number smaller than $c/2$.  This concludes the proof
of Theorem~\ref{prop-upper}.

\section{Examples with large density}

In this section we prove Theorem~\ref{prop-lower}.
We follow the argument in \cite{KLP11} with a supplementary estimate from~\cite{KLP2}.
Let $x$ be a large number, let $\ell_x$ be the least common multiple of the integers in $[1,x]$
and let $n_x=\ell_x^2$.  Then $n_x=\e^{(2+o(1))x}$ as $x\to\infty$ so that $\log\log n_x=\log x+O(1)$.
For a positive integer $m$, let $\Omega(m)$ denote the number of
prime factors of $m$ counted with multiplicity.
Let $k=k(x)=\lfloor\frac\e4\log\log n_x\rfloor$, let
$$
\D'_x=\left\{d|\ell_x:d~{\rm odd},~k<\Omega(d)<2k\right\},
$$
and let $\A$ be the set of residues $a$ modulo~$n_x$ with $\gcd(a,n_x)\in \D'_x$.  Then
$\A$ is product-free (cf.~Lemma~2.3 in~\cite{KLP11}), and since $n_x$ is even and every 
residue in $\A$ is odd, we have that $\A$ is sum-free as well.  We shall now establish a sufficiently
large lower bound on $D(\A)$ to show that $D'(n_x)$ satisfies the inequality in the theorem
with $n=n_x$.
 
For $d\in\D'_x$, the number of $a\pmod{n_x}$ with $\gcd(a,n_x)=d$ is $\varphi(n_x)/d$, so that
\begin{equation}
\label{DofA}
D(\A)=\frac{\varphi(n_x)}{n_x}\sum_{d\in\D'_x}\frac1d
=\frac{\varphi(n_x)}{n_x}\left(\sum_{\substack{d|\ell_x\\d~{\rm odd}}}\frac1d
-\sum_{\substack{d|\ell_x\\d~{\rm odd}\\d\not\in\D'_x}}\frac1d\right)
\ge\frac{\varphi(n_x)}{n_x}\left(\sum_{\substack{d|\ell_x\\d~{\rm odd}}}\frac1d
-\sum_{\substack{d|\ell_x\\d\not\in\D'_x}}\frac1d\right).
\end{equation}
We have
$$
\sum_{\substack{d|\ell_x\\d~{\rm odd}}}\frac1d
=\prod_{2<p\le x}\frac{p}{p-1}\cdot\prod_{\substack{2<p\le x\\p^a\|\ell_x}}\left(1-\frac1{p^{a+1}}\right)
\ge\prod_{2<p\le x}\frac{p}{p-1}\cdot\left(1-\frac1x\right)^{\pi(x)}
$$
and, since $\varphi(n_x)/n_x=2^{-1}\cdot\prod_{p|n_x,\, p>2} (1-1/p)$, we find that
\begin{equation}
\label{1stterm}
\frac{\varphi(n_x)}{n_x}
\sum_{\substack{d|\ell_x\\d~{\rm odd}}}\frac1d
\ge\frac12\left(1-\frac1x\right)^{\pi(x)}\ge\frac12-\frac{\pi(x)}x.
\end{equation}

We now use (6.2) in \cite{KLP2} which is the assertion that
$$
\sum_{\substack{P(d)\le x\\\Omega(d)\not\in(k,2k)}}\frac1d
\ll \frac{(\log x)^{\frac\e2\log2}}{(\log\log x)^{1/2}}.
$$
Here, $P(d)$ denotes the largest prime factor of $d$.  Since this sum includes every integer $d|\ell_x$
that is not in $\D'_x$, we have
$$
\frac{\varphi(n_x)}{n_x}\sum_{\substack{d|\ell_x\\d\not\in\D'_x}}\frac1d
\ll \frac{\varphi(n_x)}{n_x}\cdot\frac{(\log x)^{\frac\e2\log2}}{(\log\log x)^{1/2}}
\ll \frac1{(\log x)^{1-\frac\e2\log2}(\log\log x)^{1/2}},
$$
where we use Mertens' theorem in the form
$\varphi(n_x)/n_x=\prod_{p\le x}(1-1/p)\ll1/\log x$ for the last step.
Putting this estimate and \eqref{1stterm} into \eqref{DofA}, we get
$$
D(\A)\ge \frac12-\frac{\pi(x)}x-\frac{c'}{ (\log x)^{1-\frac\e2\log2}(\log\log x)^{1/2}}
$$
for some positive constant $c'$.  Using $\pi(x)/x\ll1/\log x$ and $\log x=\log\log n_x+O(1)$,
we have
$$
D(\A)\ge\frac12-\frac{\kappa'}{(\log\log n_x)^{1-\frac\e2\log2}(\log\log\log n_x)^{1/2}}
$$
for any fixed constant $\kappa'>c'$ and $x$ sufficiently large.  Thus, $D'(n_x)$ satisfies
the condition of Theorem~\ref{prop-lower} for $x$ sufficiently large, completing the proof.

\section*{Acknowledgments}
We thank Albert Bush, Chris Pryby, and Joseph Vandehey for raising the question of sets which are both
sum-free and product-free.
PK was supported in part by grants from
the G\"oran Gustafsson Foundation, and the Swedish Research Council.
JCL was supported in
part by NSF grant  DMS-1101373.  CP was supported in
part by NSF grant DMS-1001180. 


\end{document}